% ----------------------------------------------------------------
% Article Class (This is a LaTeX2e document)  ********************
% ----------------------------------------------------------------
\documentclass[<preprint>]{elsarticle}
\usepackage[english]{babel}
\usepackage{amsmath,amsthm}

\usepackage{amsfonts}

% THEOREMS -------------------------------------------------------
\newtheorem{thm}{Theorem}[section]
\newtheorem{cor}[thm]{Corollary}
\newtheorem{lem}[thm]{Lemma}

\theoremstyle{definition}

\theoremstyle{remark}

\numberwithin{equation}{section}
% ----------------------------------------------------------------
\begin{document}

\begin{frontmatter}

\title{Determinants of Circulant Matrices with Some Certain Sequences}
%% \author{Ercan ALTINI\c{S}IK $^{a}$$^{\ast}$\thanks{$^\ast$ Corresponding author. Email: ealtinisik@gazi.edu.tr \vspace{6pt}} \\\vspace{6pt} $^{a}${\em{Gazi University, Faculty of Sciences, Department of Mathematics \\ 06500 Teknikokullar - Ankara, Turkey }} }

\author{Ercan Alt\i n\i \c{s}\i k}
\ead{ercanaa@gmail.com, ealtinisik@gazi.edu.tr}
\address{Department of Mathematics, Faculty of Sciences, Gazi University \\ 06500 Teknikokullar - Ankara, Turkey}

% ----------------------------------------------------------------
\begin{abstract}
Let $\{a_k\}$ be a sequence of real numbers defined by an $m$th order linear homogenous recurrence relation. In this paper we obtain a determinant formula for the circulant matrix $A=circ(a_1, a_2, \cdots, a_n)$, providing a generalization of determinantal results in papers of Bozkurt \cite{Bozkurt}, Bozkurt and Tam \cite{BozkurtTam}, and Shen, et al. \cite{ShenCenHao}.
\end{abstract}

\begin{keyword}
\texttt{circulant matrix, determinant, Fibonacci sequence, Lucas sequence, tribonacci sequence}
\MSC[2010] 15B05, 15A15, 11B39
\end{keyword}

\end{frontmatter}

% ----------------------------------------------------------------
\section{Introduction}

The circulant matrix $V=circ(v_1, v_2, \ldots ,v_n)$ associated to real numbers $v_1, v_2, \ldots ,v_n$ is the $n \times n$ matrix
\[
        V= \left(
              \begin{array}{cccc}
                v_1 & v_2 & \cdots & v_n \\
                v_n & v_1 & \cdots & v_{n-1} \\
                \vdots & \vdots & \ddots & \vdots \\
                v_2 & v_3 & \ldots & v_1 \\
              \end{array}
        \right).
\]
Circulant matrices are one of the most interesting members of matrices. They have elegant algebraic properties. For example, $Circ(n)$ is an algebra on $\mathbb{C}$. Let $\epsilon$ be a primitive $n^{th}$ root of unity. For each $0 \leq k \leq n-1 $, $\lambda_k = \sum_{j=0}^{n-1} v_j \epsilon^{kj}$ is an eigenvalue of $V=circ(v_1, v_2, \ldots , v_n)$ and the corresponding eigenvector is $ x_k = \frac{1}{\sqrt{n}} (1, \epsilon^k, \epsilon^{2k}, \ldots , \epsilon^{(n-1)k} ) \in \mathbb{C}^n$. Indeed, all circulant matrices have the same ordered set of orthonormal eigenvectors $\{x_k\}.$ Besides, $\det V = \prod_{k=0} ^{n-1} \left( \sum_{j=0}^{n-1} v_j \epsilon^{kj} \right)$. The reader can consult the text of Davis \cite{Davis} for further properties of circulant matrices. On the other hand, circulant matrices have a widespread applications in many parts of mathematics. The excellent survey paper \cite{KraSimanca} includes many applications of circulant matrices in various areas of mathematics. Also, they have applications in signal processing, the study of cyclic codes for error corrections \cite{Gray} and in quantum mechanics \cite{Aldrovandi}.

Recently, many authors have investigated some properties of circulant matrices associated to so famous integer sequences, for example, the Fibonacci sequence and the Lucas sequence. Let $a,b,p,q \in \mathbb{Z}$. Define a sequence $(U_n)$ by the second order recurrence relation
\begin{equation}\label{secondorderrecurrence}
U_n = p U_{n-1} + q U_{n-2}
\end{equation}
$(n \geq 3)$ with initial conditions $U_1=a$ and $ U_2=b$. Taking $(p,q,a,b)=(1,1,1,1)$, $(1,1,1,3)$, $(1,2,1,1)$ and $(1,2,1,3)$, $(U_n)$ becomes the Fibonacci sequence $(F_n)$, the Lucas sequence $(L_n)$, the Jacobsthal sequence $(J_n)$ and the Jacobsthal-Lucas sequence $(j_n)$, respectively. In 1970 Lind \cite{Lind} obtained a formula for the determinant of $F=circ(F_{r}, F_{r+1}, \ldots, F_{r+n-1})$ $(r \geq 1)$. In 2005 Solak \cite{Solak} investigated matrix norms of $F=circ(F_1, F_2, \ldots, F_n)$ and $L=circ(L_1, L_2, \ldots, L_n)$. In 2011 Shen, Cen and Hao \cite{ShenCenHao} showed that
\[
\det (F) = (1-F_{n+1})^{n-1} + F_{n}^{n-2} \sum_{k=1}^{n-1} F_{k} \left( \frac{1-F_{n+1}}{F_{n}}\right)^{k-1}
\]
and
\[
\det (L) = (1-L_{n+1})^{n-1} + (L_{n}-2)^{n-2} \sum_{k=1}^{n-1} (L_{k+2}-3L_{k+1}) \left( \frac{1-L_{n+1}}{L_{n}-2}\right)^{k-1}
\]
Recently, Bozkurt and Tam \cite{BozkurtTam} have obtained determinant formulae for $J=circ(J_1, J_2, \ldots, J_n)$ and $\mathbb{J}=circ(j_1, j_2, \ldots, j_n)$ using the same method. Then Bozkurt \cite{Bozkurt} has given a generalization of these determinant formulae as
\begin{eqnarray}\label{bozkurtequation}
\det (U) &=& (a^2-b U_{n})(a- U_{n+1})^{n-2} \nonumber \\
& & + \sum_{k=2}^{n-1}(a U_{k+1} - b U_k) (a-U_{n+1})^{k-2} (q U_n-b+qa)^{n-k},
\end{eqnarray}
where $\{ U_k \}$ is the sequence in (\ref{secondorderrecurrence}).

In all of the above-mentioned papers authors calculated determinants of circulant matrices associated to a sequence defined by a second order recurrence relation by using the same method. In this paper we generalize determinantal results of these papers for certain sequences defined by a recurrence relation of order $m \geq 1$.

\section{The Main Result}
Let $c_1,c_2, \ldots ,c_m$ be real numbers and $c_m \neq 0$. Consider the sequence $\{a_k\}$ defined by the $m$th order linear homogenous recurrence relation

\begin{equation} \label{recurrencerelation}
a_k = c_1 a_{k-1} + c_2 a_{k-1} +\cdots + c_m a_{k-m} \qquad  (k\geq m+1)
\end{equation}
with initial conditions
\begin{equation} \label{initialconditions}
a_1, a_2 ,\ldots, a_m,
\end{equation}
which are given real numbers. Let $n>m$ and $A=circ(a_1,a_2,\ldots, a_n)$. Let $A_{ij}$ be the $ij-$entry of $A$. It is clear that $A_{ij}=a_{j-i+1}$ if $j\geq i$ and $a_{n+j-i+1}$ otherwise. On the other hand, for simplicity, we write $ A_{ij} = a_{(j-i+1)}$ in both case. Our main goal is to reduce the order $n$ of the determinant of $A$ and to calculate it in a simpler way. In order to perform this, first we define an $n \times n$ matrix $P=(P_{ij})$, where
\[
P_{ij}=\left\{ \begin{array}{cl}
           1 & \textrm{if  } i=j=1 \textrm{ or } i+j=n+2, \\
           -c_m & \textrm{if  } i=m+1 \textrm{ and } j=1, \\
           -c_t & \textrm{if  } i+j-t=n+2 \textrm{ and } i \geq m+1 \textrm{ and } 1 \leq t \leq m , \\
           0 & \textrm{otherwise,}
         \end{array}\right.
\]
Then the $ij-$entry of the product of $P$ and $A$ is
\[
(PA)_{ij}= \left\{ \begin{array}{cl}
           A_{1j} & \textrm{if  } i=1, \\
           A_{n-i+2,j} & \textrm{if  } 2 \leq i \leq m , \\
           \alpha_t & \textrm{if  } i+j=n+t+1 \textrm{ and } 1 \leq t \leq m, \\
           0 & \textrm{otherwise,}
         \end{array} \right.
\]
where
\begin{eqnarray} \label{alphats}
\alpha_t &=& A_{n-m+1,n-m+t} - c_1 A_{n-m+2,n-m+t} \nonumber \\
& & - \cdots - c_{m-1} A_{n,n-m+t} - c_m A_{1,n-m+t}.
\end{eqnarray}
Now, we define a sequence $\{b_{s}^{(r)}\}$ for every $r=1,2,\ldots, m-1$ by the recurrence relation
\begin{equation} \label{bsrecurence}
b_{s}^{(r)} = - \frac{\alpha_2}{\alpha_1} b_{s-1}^{(r)}  - \frac{\alpha_3}{\alpha_1} b_{s-2}^{(r)} - \cdots  - \frac{\alpha_{m}}{\alpha_1} b_{s-m+1}^{(r)} \qquad  (s \geq m)
\end{equation}
with initial conditions
\begin{equation} \label{bsinitialconditions}
b_{i}^{(r)}=\delta_{i,r},
\end{equation}
the Kronecker delta, for $i=1,2,\ldots, m-1$. We form another $n \times n$ matrix $Q=(Q_{ij})$ such that
\[
Q_{ij} =\left\{ \begin{array}{cl}
           1 & \textrm{if  } i=j=1 \textrm{ or } i+j=n+2, \\
           b_{n-i+1}^{(j-1)} & \textrm{if  } 2 \leq i \leq n-m+1 \textrm{ and } 2 \leq j \leq m, \\
           0 & \textrm{otherwise.}
         \end{array}\right.
\]
Then, we have
\[
(PAQ)_{ij}= \left\{ \begin{array}{cl}
           A_{1,1} & \textrm{if  } i=j=1, \\
           A_{n-i+2,1} & \textrm{if  } 2 \leq i \leq m \textrm{ and } j=1, \\
           \sum_{k=2}^n A_{1k}b_{n-k+1}^{(j-1)} & \textrm{if  } i=1 \textrm{ and } 2 \leq j \leq m, \\
           \sum_{k=2}^n A_{n-i+2,k}b_{n-k+1}^{(j-1)} & 2 \leq i, j \leq m, \\
           \alpha_k & \textrm{if  } i, j > m \textrm{ and } 1 \leq k \leq m \textrm{ and } i-j=k-1, \\
           0 & \textrm{otherwise.} \\
         \end{array} \right.
\]
Recall that $A_{ij}=a_{j-i+1}$ if $j \geq i$ and $a_{n+j-i+1}$ otherwise and that we write $A_{i,j}=a_{(j-i+1)}$ for simplicity. Also, it is clear that $\det P = \det Q = (-1)^{\frac{n(n+1)}{2}-1}$ and $\alpha_1=a_1-a_{n+1}$. Finally, we get the following lemma.
\begin{lem}\label{main}
Let $\{a_k\}$ be the sequence defined by the recurrence relation in (\ref{recurrencerelation}) with initial conditions in (\ref{initialconditions}), $n>m$ and $A=circ(a_1,a_2,\ldots, a_n)$. Then
\begin{equation} \label{detgeneral}
\scriptsize
\setlength{\arraycolsep}{.25\arraycolsep}
\det(A)= (a_1-a_{n+1})^{n-m} \sum_{k_1=2}^n \cdots \sum_{k_{m-1}=2}^n \left|
                                                                   \begin{array}{cccc}
                                                                     a_1    & a_{(k_1)}     & \cdots & a_{(k_{m-1})} \\
                                                                     a_2    & a_{(k_1+1)}   & \cdots & a_{(k_{m-1}+1)} \\
                                                                     \vdots & \vdots            &        & \vdots \\
                                                                     a_m    & a_{(k_1+m-1)} & \cdots &  a_{(k_{m-1}+m-1)} \\
                                                                   \end{array}
                                                                 \right| \prod_{i=1}^{m-1} b_{n-k_i+1}^{(i)},
\normalsize
\end{equation}
where sequences $\{b_{s}^{(r)}\}$ are defined by the recurrence relation in (\ref{bsrecurence}) with initial conditions in (\ref{bsinitialconditions}).
\end{lem}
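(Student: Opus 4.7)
The plan is to read off $\det A$ from the block structure of $PAQ$ established in the excerpt. First, since $\det P = \det Q = (-1)^{n(n+1)/2-1}$, their product is $1$, so $\det A = \det(PAQ)$. Second, from the given case-by-case description of $(PAQ)_{ij}$, the matrix $PAQ$ has the block upper triangular form
\[
PAQ = \begin{pmatrix} M & \ast \\ 0 & T \end{pmatrix},
\]
where $M$ is the leading $m \times m$ block and $T$ is an $(n-m) \times (n-m)$ lower triangular matrix with constant diagonal $\alpha_1 = a_1 - a_{n+1}$ (and subdiagonals carrying the remaining $\alpha_k$'s). Consequently $\det(PAQ) = (a_1 - a_{n+1})^{n-m} \det M$.

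The remaining work is the multilinear expansion of $\det M$. Using $A_{n-i+2,1} = a_i$ for $i = 1, \ldots, m$, the first column of $M$ is $(a_1, a_2, \ldots, a_m)^T$, while for $2 \leq j \leq m$ the $j$th column of $M$ is
\[
\sum_{k=2}^n b^{(j-1)}_{n-k+1} \begin{pmatrix} A_{1,k} \\ A_{n,k} \\ \vdots \\ A_{n-m+2,k} \end{pmatrix}.
\]
Applying multilinearity of the determinant separately in each of columns $2, \ldots, m$, and using the cyclic identification $A_{n-i+2,k} = a_{(k+i-1)}$ (valid for $i = 1, \ldots, m$) recalled just before the lemma, I would obtain
\[
\det M = \sum_{k_1 = 2}^{n} \cdots \sum_{k_{m-1}=2}^{n} \Bigl( \prod_{i=1}^{m-1} b^{(i)}_{n-k_i+1} \Bigr) \left| \begin{array}{cccc} a_1 & a_{(k_1)} & \cdots & a_{(k_{m-1})} \\ a_2 & a_{(k_1+1)} & \cdots & a_{(k_{m-1}+1)} \\ \vdots & \vdots & & \vdots \\ a_m & a_{(k_1+m-1)} & \cdots & a_{(k_{m-1}+m-1)} \end{array} \right|,
\]
which combined with the factor $(a_1 - a_{n+1})^{n-m}$ is exactly (\ref{detgeneral}).

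The only genuinely non-routine point is the block upper triangular structure of $PAQ$ itself, i.e.\ the vanishing of the block below $M$ (rows $i > m$, columns $2 \leq j \leq m$). The key observation here is that, once $b^{(r)}_s$ is extended to $1 \leq s \leq m-1$ via the initial conditions (\ref{bsinitialconditions}), the entry $(PAQ)_{ij}$ in that block telescopes to $\sum_{t=1}^m \alpha_t b^{(j-1)}_{i-t}$, and this is precisely the recurrence (\ref{bsrecurence}) evaluated at $s = i-1 \geq m$, hence zero. With this structural identity in hand everything else reduces to the bookkeeping sketched above.
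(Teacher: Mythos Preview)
Your proof is correct and follows exactly the approach the paper sets up in the paragraphs preceding the lemma: the paper constructs $P$ and $Q$, records the entrywise description of $PAQ$, notes that $\det P=\det Q=(-1)^{n(n+1)/2-1}$ and $\alpha_1=a_1-a_{n+1}$, and then simply states the lemma, leaving the reader to read off the determinant from the block structure. Your write-up makes that last step explicit (block-triangular determinant factorization followed by multilinear expansion of the leading $m\times m$ block), so it coincides with the paper's intended argument; your remark that the vanishing of the lower-left block is exactly the recurrence~(\ref{bsrecurence}) for $b_s^{(r)}$ is precisely the reason $Q$ was designed with those entries.
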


Indeed, the determinant formula for $A=circ(a_1, a_2, \ldots, a_n)$ in Lemma~\ref{main} is not effective but we obtain it by generalizing the common method of papers \cite{ShenCenHao,BozkurtTam,Bozkurt} for the sequence $ \{a_k \}$ defined by a recurrence relation of order $m \geq 1$. To illustrate our goal we consider the well-known tribonacci sequence. The tribonacci sequence $\{ a_k \}$ is defined by the recurrence relation
\[
a_k=a_{k-1}+a_{k-2}+a_{k-3} \qquad (k \geq 4)
\]
with initial conditions $a_1=1$, $a_2=1$, $a_3=2$. For convenience, we take $a_0=0$.
\begin{cor} \label{tribonaccidet}
Let $\{ a_k \}$ be the tribonacci sequence, $n>3$ and $A=circ(a_1, a_2, \ldots, a_n)$. Then
\begin{eqnarray*}
\det(A)&=&
(1-a_{n+1})^{n-3}\bigg(\sum_{i=2}^{n-3}\sum_{j=i+1}^{n-2}\big(a_{i-2}a_{j-1}-a_{i-1}a_{j-2}\big)\big(\frac{\alpha_3}{\alpha_1}\big)^{n-j-1}b_{j-i+2}^{(1)} \\ & & + \sum_{i=2}^{n-2}\big((a_{i-2}+a_{i-1})+a_{n-1}(a_{i+2}-2a_{i+1})+a_{n}(2a_{i}-a_{i+2})\big)b_{n-i+1}^{(1)} \\
& & + \sum_{i=2}^{n-2}\big(-a_{i-1}+a_{n}(a_{i+2}-2a_{i+1})\big) \frac{\alpha_1}{\alpha_3} b_{n-i+2}^{(1)} + ( 2a_n^2 - 2a_{n} - a_{n-1} + 1 )\bigg).
\end{eqnarray*}

\end{cor}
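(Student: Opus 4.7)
The plan is to apply Lemma~\ref{main} with $m = 3$ and then explicitly evaluate the resulting double sum. Since $c_1 = c_2 = c_3 = 1$ and the tribonacci recurrence gives $a_{n+1} = a_n + a_{n-1} + a_{n-2}$, formula (\ref{alphats}) yields $\alpha_1 = a_1 - a_n - a_{n-1} - a_{n-2} = 1 - a_{n+1}$, $\alpha_2 = -(a_n + a_{n-1})$, and $\alpha_3 = -a_n$, which already identifies the prefactor $(1-a_{n+1})^{n-3}$ in the statement. Both $b^{(1)}_s$ and $b^{(2)}_s$ satisfy the same two-term recurrence $b_s = -(\alpha_2/\alpha_1) b_{s-1} - (\alpha_3/\alpha_1) b_{s-2}$, and comparing initial values at $s = 1, 2$ together with $b^{(1)}_3 = -\alpha_3/\alpha_1$ gives the identity $b^{(2)}_s = -(\alpha_1/\alpha_3)\, b^{(1)}_{s+1}$ for all $s \geq 1$, so everything can be written in terms of $b^{(1)}$.

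Write the inner sum as $S = \sum_{k_1, k_2 = 2}^{n} D(k_1, k_2)\, b^{(1)}_{n-k_1+1}\, b^{(2)}_{n-k_2+1}$, where $D(k_1, k_2)$ is the $3 \times 3$ determinant in Lemma~\ref{main}. Since $D$ is alternating in the last two columns, pairing $(k_1, k_2)$ with $(k_2, k_1)$ collapses $S$ to $\sum_{k_1 < k_2} D(k_1, k_2)\, W(k_1, k_2)$, where $W(k_1, k_2) = b^{(1)}_{n-k_1+1} b^{(2)}_{n-k_2+1} - b^{(1)}_{n-k_2+1} b^{(2)}_{n-k_1+1}$. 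Substituting the expression for $b^{(2)}$ reduces $W$, up to the factor $-\alpha_1/\alpha_3$, to the Casoratian $f(u, v) = b^{(1)}_u b^{(1)}_{v+1} - b^{(1)}_{u+1} b^{(1)}_v$ with $u = n - k_1 + 1$, $v = n - k_2 + 1$. A direct use of the recurrence yields the shift identity $f(u+1, v+1) = (\alpha_3/\alpha_1)\, f(u, v)$; iterating down to $f(u, 1) = -b^{(1)}_{u+1}$ produces
\[
W(k_1, k_2) = (\alpha_3/\alpha_1)^{n - k_2 - 1}\, b^{(1)}_{k_2 - k_1 + 2} \qquad (k_1 \leq k_2).
\]

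The remaining task is to compute $D(k_1, k_2)$, for which the index range must be partitioned according to how the circulant wrap-around affects the columns: $a_{(k+t)} = a_{k+t}$ only for $k + t \le n$, so $k = n - 1$ forces $a_{(n+1)} = a_1$ and $k = n$ forces $a_{(n+1)} = a_1, a_{(n+2)} = a_2$. This gives four regions: the interior $2 \le k_1 < k_2 \le n - 2$, the two mixed strips $k_1 \in [2, n-2]$ with $k_2 \in \{n-1, n\}$, and the corner $(n-1, n)$. In the interior, the operation $R_3 \leftarrow R_3 - R_2 - R_1$ exploits the tribonacci identity $a_{m+2} - a_{m+1} - a_m = a_{m-1}$ (valid for $m \ge 2$) to produce a third row $(0, a_{k_1 - 1}, a_{k_2 - 1})$; a further row reduction using $a_{k+1} - a_k = a_{k-1} + a_{k-2}$ collapses the determinant to $D(k_1, k_2) = a_{k_1 - 2} a_{k_2 - 1} - a_{k_1 - 1} a_{k_2 - 2}$, producing the sum $T_1$.

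The main obstacle is the boundary regions, where wrap-around breaks the recurrence in the third column and each $3 \times 3$ determinant must be expanded directly. Applying the same identities carefully yields $D(i, n-1) = (a_{i-2} + a_{i-1}) + a_{n-1}(a_{i+2} - 2 a_{i+1}) + a_n(2 a_i - a_{i+2})$ and $D(i, n) = -a_{i-1} + a_n(a_{i+2} - 2 a_{i+1})$ for $i \in [2, n-2]$, which when combined with the specialized Casoratian values $W(i, n-1) = b^{(1)}_{n-i+1}$ and $W(i, n) = (\alpha_1/\alpha_3)\, b^{(1)}_{n-i+2}$ give the terms $T_2$ and $T_3$. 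Finally, the lone corner contributes $W(n-1, n) = (\alpha_1/\alpha_3)\, b^{(1)}_3 = -1$ and, by direct expansion, $D(n-1, n) = -2 a_n^2 + 2 a_n + a_{n-1} - 1$, so its product yields $T_4 = 2 a_n^2 - 2 a_n - a_{n-1} + 1$. Multiplying $T_1 + T_2 + T_3 + T_4$ by the prefactor $(1 - a_{n+1})^{n-3}$ gives the claimed expression.
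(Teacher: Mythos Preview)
Your proof follows the same route as the paper's: apply Lemma~\ref{main} with $m=3$, exploit the antisymmetry of the $3\times 3$ determinant to reduce to $k_1<k_2$, establish the identity $b^{(1)}_k b^{(2)}_t - b^{(1)}_t b^{(2)}_k = (\alpha_3/\alpha_1)^{t-2} b^{(1)}_{k-t+2}$, and then split the sum into the interior $2\le i<j\le n-2$, the two boundary strips $j=n-1$ and $j=n$, and the corner $(n-1,n)$, evaluating each $3\times 3$ minor explicitly. The one substantive difference is in how that key identity is obtained: the paper derives Binet formulae for $b^{(1)}$ and $b^{(2)}$ after checking that the discriminant $\alpha_2^2-4\alpha_1\alpha_3$ is negative (so the characteristic roots are distinct), whereas your Casoratian shift $f(u+1,v+1)=(\alpha_3/\alpha_1)f(u,v)$ together with $b^{(2)}_s=-(\alpha_1/\alpha_3)b^{(1)}_{s+1}$ is more elementary and sidesteps any discriminant hypothesis.
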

\begin{proof}
Let $\{ a_k \}$ in Lemma~\ref{main} be the tribonacci sequence. Then clearly $m=3$, $a_1=a_2=1$, $a_3=2$, $\alpha_1=1-a_{n+1}$ and by Lemma~\ref{main}, we have
$$
\det(A)= (1-a_{n+1})^{n-3} \sum_{i=2}^{n} \sum_{j=2}^{n} \left|
                                                                   \begin{array}{ccc}
                                                                     1   & a_{(i)}    & a_{(j)} \\
                                                                     1   & a_{(i+1)}  & a_{(j+1)} \\
                                                                     2   & a_{(i+2)}  & a_{(j+2)} \\
                                                                   \end{array}
                                                                 \right|
                                                                 b_{n-i+1}^{(1)} b_{n-j+1}^{(2)}.
$$
We denote the $3 \times 3$ determinant in the summation by $ \Delta ((i),(j))$. It is clear that $ \Delta ((i),(i))=0$ and $ \Delta ((j),(i))= - \Delta ((i),(j))$.  Also, we have $\Delta ((i),(j)) = \Delta (i,j)$ if $1 \leq i,j \leq n-3$. Thus
$$
\det(A)= (1-a_{n+1})^{n-3} \sum_{i=2}^{n-1} \sum_{j=i+1}^{n} \Delta ((i),(j))  \big( b_{n-i+1}^{(1)} b_{n-j+1}^{(2)} - b_{n-j+1}^{(1)} b_{n-i+1}^{(2)} \big).
$$
Now, sequences $\{b_{k}^{(1)}\}$ and $\{b_{k}^{(2)}\}$ are generated by the recurrence relation in (\ref{bsrecurence}) with different initial conditions, all of which are given in (\ref{bsinitialconditions}). The characteristic equation of the recurrence relation in (\ref{bsrecurence}) is $ \alpha_1 r^2 + \alpha_2 r + \alpha_3 =0 $, where $\alpha_1 = 1- a_{n+1}$, $\alpha_2 = -a_{n}- a_{n-1}$ and $\alpha_3 = -a_{n}$. Since $ \alpha_2^2-4\alpha_1 \alpha_3 < (-a_n + a_{n-1})(3a_n + a_{n-1}) < 0 $ for all $n \geq 1$, the characteristic equation has two distinct complex roots, say $\lambda$ and $\mu$. Finally, Binet's formulae for sequences $b_{k}^{(1)}$ and $b_{k}^{(2)}$ are $ b_{k}^{(1)} = \frac{\lambda \mu}{\mu - \lambda}\big( \lambda^{k-2} -\mu^{k-2} \big)$ and $ b_{k}^{(2)} = \frac{1}{\lambda - \mu } \big( \lambda^{k-1} -\mu^{k-1} \big)$, respectively. Using Binet's formulae we have the identity
$$
b_{k}^{(1)} b_{t}^{(2)} - b_{t}^{(1)} b_{k}^{(2)} = \bigg( \frac{\alpha_3}{\alpha_1} \bigg)^{t-2} b_{k-t+2}^{(1)},
$$
where $k \geq t$. Thus, we have
 \begin{eqnarray*}
\det(A)&=&
(1-a_{n+1})^{n-3}\bigg(\sum_{i=2}^{n-3}\sum_{j=i+1}^{n-2} \Delta(i,j) \big(\frac{\alpha_3}{\alpha_1}\big)^{n-j-1}b_{j-i+2}^{(1)}  \\
& & + \sum_{i=2}^{n-2} \Delta(i,(n-1)) b_{n-i+1}^{(1)}  + \sum_{i=2}^{n-2} \Delta(i,(n)) \frac{\alpha_1}{\alpha_3} b_{n-i+2}^{(1)} \\
& & + \Delta((n-1),(n)) \frac{\alpha_1}{\alpha_3} b_{3}^{(1)} \bigg).
\end{eqnarray*}
The proof follows from equalities
$$
\Delta(i,j) = a_{i-2}a_{j-1} - a_{i-1}a_{j-2},
$$
$$
\Delta(i,(n-1)) = (2 a_n - 1)a_{i} + (1-2 a_{n-1})a_{i+1} + (a_{n-1} - a_{n}) a_{i+2},
$$
$$
\Delta(i,(n)) = a_{i} + (1-2 a_{n})a_{i+1} + a_{n-1} a_{i+2},
$$
$$
\Delta((n-1),(n)) \frac{\alpha_1}{\alpha_3} b_{3}^{(1)} = 2a_n^2 - 2a_{n} - a_{n-1} + 1.
$$
\end{proof}

We cannot state that the determinant formula in Corollary~\ref{tribonaccidet} is elegant but it reduces an $n \times n$ determinant to a double sum.

\begin{cor}[\cite{Bozkurt}, Theorem 1]   \label{corollary}
Let $\{ U_k \}$ be the sequence defined by the recurrence relation given in (\ref{secondorderrecurrence}) with initial conditions $U_1=a,U_2=b$, $n>3$ and $A=circ(U_1, U_2, \ldots, U_n)$. Then
$$
\det(U)= (a^2-b U_n)(a-U_{n+1})^{n-2} + \sum_{k=2}^{n-1}(a U_{k+1}-b U_k) (a - U_{n+1})^{k-2} (q U_n -b + p a)^{n-k}.
$$
\end{cor}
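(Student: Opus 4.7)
My plan is to specialize Lemma~\ref{main} to the case $m=2$, where everything collapses to a single sum and the auxiliary sequence becomes geometric.

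First I would set $m=2$ in Lemma~\ref{main}. Then (\ref{detgeneral}) reduces to
\[
\det(A)=(a_1-a_{n+1})^{n-2}\sum_{k=2}^{n}\left|\begin{array}{cc}a_1 & a_{(k)}\\ a_2 & a_{(k+1)}\end{array}\right| b_{n-k+1}^{(1)},
\]
with $a_1=a$, $a_2=b$ and $c_1=p$, $c_2=q$. Next I would compute the $\alpha_t$ directly from (\ref{alphats}) using $A_{n-1,n-1}=a_1$, $A_{n,n-1}=a_n$, $A_{1,n-1}=a_{n-1}$ and $A_{n-1,n}=a_2$, $A_{n,n}=a_1$, $A_{1,n}=a_n$. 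This yields $\alpha_1=a-pU_n-qU_{n-1}=a-U_{n+1}$ (as already noted in the excerpt) and $\alpha_2=b-pa-qU_n=-(pa+qU_n-b)$.

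Next, since $m-1=1$, the recurrence (\ref{bsrecurence}) collapses to the one-term relation $b_s^{(1)}=-(\alpha_2/\alpha_1)b_{s-1}^{(1)}$ with initial condition $b_1^{(1)}=1$, so
\[
b_s^{(1)}=\Bigl(-\frac{\alpha_2}{\alpha_1}\Bigr)^{s-1}=\left(\frac{pa+qU_n-b}{a-U_{n+1}}\right)^{s-1}.
\]
In particular, for $s=n-k+1$ we obtain $(a-U_{n+1})^{n-2}\, b_{n-k+1}^{(1)}=(a-U_{n+1})^{k-2}(qU_n-b+pa)^{n-k}$, which is exactly the mixed power that appears in the statement. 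This handles the scalar factors cleanly.

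Finally I would evaluate the $2\times 2$ determinants. For $2\le k\le n-1$ both matrix entries lie above the wrap, so $a_{(k)}=U_k$ and $a_{(k+1)}=U_{k+1}$, giving determinant value $aU_{k+1}-bU_k$. The main subtlety, and where I expect I have to be careful, is the boundary term $k=n$: here $a_{(n+1)}$ is not $U_{n+1}$ but rather the circulant-matrix entry $A_{n,n}=a_1=a$, so the determinant equals $a^2-bU_n$, and since $b_{1}^{(1)}=1$ this term contributes $(a^2-bU_n)(a-U_{n+1})^{n-2}$ with no extra factor. Separating off this $k=n$ contribution from the rest of the sum, combining the remaining terms with the closed form for $b_{n-k+1}^{(1)}$ computed above, and recognizing the resulting expression as the right-hand side of the corollary completes the proof.
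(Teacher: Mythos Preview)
Your proposal is correct and follows essentially the same route as the paper's own proof: specialize Lemma~\ref{main} to $m=2$, note that $\alpha_1=a-U_{n+1}$ and $\alpha_2=b-pa-qU_n$, deduce that $b_s^{(1)}=(-\alpha_2/\alpha_1)^{s-1}$ is geometric, split off the wrapped term $k=n$ giving $a^2-bU_n$, and simplify. Your explicit handling of the boundary entry $a_{(n+1)}=A_{n,n}=a_1$ is a bit more detailed than the paper's presentation, but the argument is the same.
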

\begin{proof}
Let $\{ a_k \}$ in Lemma~\ref{main} be the sequence $\{ U_k \}$ given in (\ref{secondorderrecurrence}) with initial conditions $U_1=a$ and $U_2=b$.  Then $\alpha_1 =a - U_{n+1}$, $\alpha_2 = b- p U_{1} - q U_{n}$ and hence $b_i^{(1)} = (-\alpha_2 / \alpha_1)^{i-1}$. Thus, by Lemma~\ref{main}, we have
\begin{eqnarray*}
% \nonumber to remove numbering (before each equation)
  \det(A) &=& (a-U_{n+1})^{n-2} \sum_{k=2}^n \left|
                                                 \begin{array}{cc}
                                                   a & U_{(k)} \\
                                                   b & U_{(k+1)} \\
                                                 \end{array}
                                               \right| b_{n-k+1}^{(1)}
   \\
          &=& (a-U_{n+1})^{n-2} \big[(a^2-U_n b) + \sum_{k=2}^{n-1} \left|
                                                 \begin{array}{cc}
                                                   a & U_{k} \\
                                                   b & U_{k+1} \\
                                                 \end{array}
                                               \right| b_{n-k+1}^{(1)}\big] \\
   &=& (a-U_{n+1})^{n-2} \big[(a^2-b U_n) + \sum_{k=2}^{n-1} (a U_{k+1}- b U_k ) (-\frac{q U_n-b+pa}{a-U_{n+1}})^{n-k}\big].
\end{eqnarray*}
A simple calculation completes the proof.
\end{proof}
Renaming terms of sequence $\{U_k\}$ as $\{ W_{k-1} \}$ we obtain the same formula in Theorem~1 of Bozkurt's paper \cite{Bozkurt}. Also, by choosing convenient values for $p$, $q$, $a$ and $b$ in Corollary~\ref{corollary} we can obtain all determinant formulae in \cite{BozkurtTam,ShenCenHao}. Taking $(p,q,a,b)=(1,1,1,1)$, $(1,1,1,3)$, $(1,2,1,1)$ and $(1,2,1,3)$, we have Theorems 2.1 and 3.1 of \cite{ShenCenHao} and Theorems 2.1 and 2.2 of \cite{BozkurtTam}, respectively. Also, by Lemma~\ref{main}, we can easily evaluate the determinant of $A=circ(a,a^2,a^3, \ldots , a^n)$, where $a$ is a nonzero real number, as $\det(A)=a^n (1-a^n)^{n-1}$.

\end{document}